\newtheorem{theorem}{Theorem}[section]
\newtheorem{proposition}[theorem]{Proposition}
\theoremstyle{definition}
\newtheorem{definition}[theorem]{Definition}
\newcommand{\seq}[1]{\langle #1 \rangle}
\newcommand{\RCA}{\mathsf{RCA}}
\newcommand{\ACA}{\mathsf{ACA}}
\begin{document}

\title{A note on the reverse mathematics of the sorites}

\author{Damir D. Dzhafarov}
\address{Department of Mathematics\\
University of Connecticut\\
Storrs, Connecticut U.S.A.}
\email{damir@math.uconn.edu}


\thanks{The author was partially supported by NSF Grant DMS-1400267. The author is grateful to E.~N.~Dzhafarov for helpful discussions. }

\maketitle

\begin{abstract}
Sorites is an ancient piece of paradoxical reasoning
pertaining to sets with the following properties: (Supervenience)
elements of the set are mapped into some set of ``attributes'';
(Tolerance) if an element has a given attribute then so are the elements
in some vicinity of this element; and (Connectedness) such vicinities
can be arranged into pairwise overlapping finite chains connecting
two elements with different attributes. Obviously, if Superveneince is assumed, then (1) Tolerance implies lack of Connectedness, and (2) Connectedness implies lack of Tolerance. Using a very general but precise definition of ``vicinity'', Dzhafarov and Dzhafarov (2010) offered two formalizations of these mutual contrapositions. Mathematically, the formalizations are equally valid, but in this paper, we offer a different basis by which to compare them. Namely, we show that the formalizations have different proof-theoretic strengths when measured in the framework of reverse mathematics: the formalization of (1) is provable in $\mathsf{RCA}_0$, while the formalization of (2) is equivalent to $\ACA_0$ over $\RCA_0$. Thus, in a certain precise sense, the approach of (1) is more constructive than that of (2).
\end{abstract}

\section{Introduction}

In November of 2009, the {\em Reverse Mathematics: Foundations and Applications} workshop at the University of Chicago asked about using mathematical logic as a possible new basis for judging and comparing alternative and competing quantitative approaches to problems in cognitive science. There have been several papers written in this direction (e.g., \cite{Marcone-2007}, \cite{Dzhafarov-2011a}), and this note is a further such contribution. In it, we show that two ostensibly equivalent mathematical approaches for a certain problem in cognitive science can be teased apart in terms of their logical (or proof-theoretic) strength.


Sorites is a ``paradox'' attributed to Eubulites
of Miletus, a philosopher of the Megarian School in the 4th century
BCE. It continues to be of interest to philosophers of mind and
cognitive scientists. The essence of the two original versions of Sorites (known as The
Heap and The Bald Man) is that if natural numbers can be classified as very
large and not very large, and if $N$ is a very large number (a heap
of grains, a full head of hair), then so is $N-1$; but by repeated
subtractions of 1 (removing grains or hairs one-by-one) one can get from $N$
to any $n$ that is not a very large number. There are obvious analogues of Sorites in a continuum of real numbers (e.g., growing a very short person by a sufficiently small amount would not make this person not very short), and spaces without linear orders (e.g., in the set of spectra of color patches,
a sufficiently small change in the spectral composition of a patch
judged to be red would not change its redness). There is an opinion
that due to the use of the notion of ``small changes'' (sufficiently small, or as small as possible) Sorites requires a metric space \cite{Williamson-1994} or at least a full-fledged topological
space \cite{WC-2010}. However, it has been shown in \cite{DD-2010a} that the
most general formulation of Sorites only requires a variant of the
pre-topological structure proposed by Fr\'{e}chet and dubbed by him \emph{V-spaces} (see, e.g., \cite{Sierpinski-1952}).

\begin{definition}\label{defn:Vspace}
	A \emph{V-space} is a pair $(X,\{\mathcal{V}_x : x \in X\})$, where $X$ is a set, and for each $x \in X$, $\mathcal{V}_x$ is a non-empty collection of subsets of $X$ containing $x$.
\end{definition}

When we have fixed a particular V-space $(X,\{\mathcal{V}_x : x \in X\})$, we call each $V \in \mathcal{V}_x$ a \emph{vicinity} of $x$. Each $x \in X$ has at least one vicinity, and one can think of each such vicinity as representing a ``sense'' in which the elements of that vicinity are close to $x$. Since $x$ belongs to each of its vicinities, it is therefore close to itself in every sense. On the other hand, $\{ x\} $ need not belong to $\mathcal{V}_{x}$,
and more generally, if $V_{x}\in\mathcal{V}_{x}$ and $V'\subset V_{x}$,
$V$ need not belong to $\mathcal{V}_{x}$. Furthermore, if some $y \in X$ belongs to some vicinity of $x$, it need not be the case that $x$ belongs to some vicinity of $y$. In other words, $y$ can be close to $x$ in some sense, without $x$ needing to be close to $y$ in any sense; the notion of ``being close to in some sense'' is not necessarily symmetric. The vicinities of a V-space can be used for the following natural definition of connectedness.

\begin{definition}\label{D:cover}
	Let $(X,\{\mathcal{V}_x : x \in X\})$ be a V-space.
	\begin{enumerate}
		\item A \emph{cover} of this V-space is a sequence $\{V_x : x \in X\}$ such that $V_x \in \mathcal{V}_x$ for each $x \in X$.
		\item Two elements $a,b \in X$ are \emph{connected} in this V-space if for every cover $\{V_x : x \in X\}$ there is a finite sequence $x_0,\ldots,x_k$ of elements of $X$ such that $a = x_0$, $b = x_k$, and $V_{x_j} \cap V_{x_{j+1}} \neq \emptyset$ for each $j < k$.
		\item  If $a,b \in X$ are not connected, then we say a cover $\{V_x : x \in X\}$ for which there is no finite sequence $x_0,\ldots,x_k$ as in (2) \emph{witnesses} that $a$ and $b$ are not connected.
	\end{enumerate}
\end{definition}

Using the language of V-spaces, Dzhafarov and Dzhafarov \cite{DD-2010a} formulated the following theorem central to our analysis.

\begin{theorem}[{\cite[Theorem 3.5]{DD-2010a}}]\label{thm:nontol}
	Let $(X,(\mathcal{V}_x : x \in X))$ be a $V$-space, $Y$ a set, and $\pi : X \to Y$ a function. If $a,b \in X$ are connected and $\pi(a) \neq \pi(b)$, then there exists a $x \in X$ such that $\pi$ is not constant on any vicinity of $x$.
\end{theorem}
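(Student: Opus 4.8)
The plan is to prove the contrapositive. Assume that for every $x \in X$ the function $\pi$ is constant on at least one vicinity of $x$; I will show that no pair $a, b \in X$ can be both connected and satisfy $\pi(a) \neq \pi(b)$. The first step is to pass from this hypothesis to an actual cover: for each $x \in X$ select a vicinity $V_x \in \mathcal{V}_x$ on which $\pi$ is constant, and let $\{V_x : x \in X\}$ be the resulting \emph{tolerance cover}; since $x \in V_x$, this means $\pi(y) = \pi(x)$ for every $y \in V_x$. The second step invokes connectedness: fixing $a, b$ with $\pi(a) \neq \pi(b)$ and supposing for contradiction that they are connected, Definition~\ref{D:cover}(2) applied to the tolerance cover produces a finite sequence $x_0, \dots, x_k$ with $x_0 = a$, $x_k = b$, and $V_{x_j} \cap V_{x_{j+1}} \neq \emptyset$ for each $j < k$.

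The third step propagates the value of $\pi$ along this chain. Choosing for each $j < k$ some $z_j \in V_{x_j} \cap V_{x_{j+1}}$, constancy of $\pi$ on $V_{x_j}$ gives $\pi(x_j) = \pi(z_j)$ and constancy on $V_{x_{j+1}}$ gives $\pi(z_j) = \pi(x_{j+1})$, so $\pi(x_j) = \pi(x_{j+1})$ for all $j < k$; an induction on $j \le k$ then yields $\pi(a) = \pi(x_0) = \pi(x_k) = \pi(b)$, contradicting $\pi(a) \neq \pi(b)$. Hence the contrapositive, and therefore the theorem, holds.

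It remains to check that, apart from the first step, the argument lives in $\RCA_0$. Extracting the finitely many witnesses $z_j$ from the nonempty intersections and running the concluding induction over the bounded interval $j \le k$ (with a quantifier-free matrix) are both routine — no comprehension beyond $\Delta^0_1$ and no induction beyond $\Sigma^0_1$ (as furnished by $\B\Sigma^0_1$, provable in $\RCA_0$) is needed. The one delicate point is the first step: producing a tolerance cover, that is, choosing a constant vicinity uniformly in $x$. In the formalization of version (1) this is harmless — that version's hypothesis is in effect the existence of a tolerance cover, so the cover is handed to us outright and the whole argument goes through in $\RCA_0$. I expect this to be exactly the point of divergence highlighted in the paper: the analogous step in the formalization of version (2) forces one to build a cover from a genuinely unbounded, $\Sigma^0_2$-style search over vicinities, which $\RCA_0$ cannot perform and which is what pushes that formalization up to $\ACA_0$.
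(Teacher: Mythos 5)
Your argument is correct and is essentially the paper's: the paper does not reprove Theorem~\ref{thm:nontol} itself (it is cited from \cite{DD-2010a}), but your tolerance-cover-plus-chain-propagation proof is exactly what the paper effectivizes --- the selection of a constant vicinity for each $x$ is the $\emptyset'$-search in Proposition~\ref{prop:0primeupper}, and the propagation of $\pi$ along the chain by induction is spelled out in the proof of the final theorem. Your closing diagnosis also matches the paper's: constructing the tolerance cover is the step that costs $\ACA_0$ here, whereas the V-space induced by $\pi$ in Theorem~\ref{thm:nonconn} supplies its unique cover for free, which is why that version stays in $\RCA_0$.
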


The significance of this theorem for Sorites is as follows. The soritical reasoning is based on the assumption that the function $\pi : X \to Y$ can be chosen so that every $x \in X$ has a vicinity $V_x$ with $\pi(y)=\pi(x)$ for any $y \in V_x$. This assumption is called Tolerance. The assumption that there are connected $a,b \in X$ with $\pi(a) \neq \pi(b)$ is called Connectedness. Using this terminology, the theorem above says that, for any V-space and any function $\pi$, Connectedness implies lack of Tolerance.

Dzhafarov and Dzhafarov \cite{DD-2010a} also considered the following alternative formulation, in which tolerance is assumed a priori. The theorem below can, by analogy with Theorem \ref{thm:nontol}, be seen as a formalization of the statement that Tolerance implies lack of Connectedness.

\begin{definition}
	Let $X$ and $Y$ be sets, and $\pi: X \to Y$ a function. The \emph{V-space induced by $\pi$} is $(X,\{\mathcal{V}_x : x \in X\})$, where for each $x \in X$, $\mathcal{V}_x$ contains the single vicinity $\{y \in X: \pi(x) = \pi(y)\}$.
\end{definition}

\begin{theorem}[{\cite[Theorem 3.10]{DD-2010a}}]\label{thm:nonconn}
	Let $X$ and $Y$ be sets and $\pi : X \to Y$ a function. If $\pi(a) \neq \pi(b)$ for some $a,b \in X$, then $a$ and $b$ are not connected in the V-space induced by $\pi$.
\end{theorem}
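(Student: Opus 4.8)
The plan is to write down, explicitly, a single cover of the induced V-space that witnesses that $a$ and $b$ are not connected. The point is that this V-space is essentially degenerate: since each $\mathcal{V}_x$ is a singleton, there is precisely one cover, namely $\{V_x : x \in X\}$ with $V_x = \{y \in X : \pi(x) = \pi(y)\}$, the $\pi$-fibre through $x$. In the setting of reverse mathematics this sequence is available already in $\RCA_0$, since it is coded by the set $\{(x,y) : \pi(x) = \pi(y)\}$, which is obtained from the parameter $\pi$ by $\Delta^0_1$ comprehension (the defining relation is both $\Sigma^0_1$ and $\Pi^0_1$ because $\pi$ is a total function).

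First I would record the key fact about this cover: for any $x,x' \in X$, we have $V_x \cap V_{x'} \neq \emptyset$ if and only if $\pi(x) = \pi(x')$. The forward direction holds because any $z \in V_x \cap V_{x'}$ satisfies $\pi(x) = \pi(z) = \pi(x')$; the reverse direction holds because $x \in V_x \cap V_{x'}$ whenever $\pi(x) = \pi(x')$.

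Then I would argue by contradiction. If $a$ and $b$ were connected, then, applying the definition of connectedness to the unique cover described above, there would be a finite sequence $x_0,\ldots,x_k$ with $x_0 = a$, $x_k = b$, and $V_{x_j} \cap V_{x_{j+1}} \neq \emptyset$ for all $j < k$. By the key fact, $\pi(x_j) = \pi(x_{j+1})$ for each such $j$, and hence $\pi(x_j) = \pi(x_0)$ for all $j \leq k$ — a conclusion drawn by induction on $j \leq k$, which in the formalization is a $\Sigma^0_0$ induction and so available in $\RCA_0$. In particular $\pi(b) = \pi(x_k) = \pi(x_0) = \pi(a)$, contradicting the hypothesis $\pi(a) \neq \pi(b)$. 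Therefore $a$ and $b$ are not connected, and the cover $\{V_x : x \in X\}$ witnesses this.

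I do not expect a genuine obstacle here: the whole argument rests on the triviality that in the induced V-space every vicinity is a $\pi$-fibre, so overlapping vicinities force equal $\pi$-values, and $\pi(a) \neq \pi(b)$ then blocks any finite overlapping chain from $a$ to $b$. Given the paper's reverse-mathematics aims, the only point requiring care is confirming that the canonical cover is a genuine object of $\RCA_0$ and that the finite induction used is weak enough; both are immediate, which is precisely why this formalization lands at $\RCA_0$ — in sharp contrast to Theorem \ref{thm:nontol}, whose analysis must contend with arbitrary V-spaces and arbitrarily many vicinities per point.
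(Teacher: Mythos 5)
Your proof is correct and follows essentially the same route as the paper's: both use the unique cover by $\pi$-fibres, observe that overlapping fibres force equal $\pi$-values, and run a finite induction along the chain to contradict $\pi(a)\neq\pi(b)$. The only quibble is that the induction formula $\pi(x_j)=\pi(x_0)$ involves an unbounded quantifier over the graph of $\pi$, so it is $\Delta^0_1$ rather than $\Sigma^0_0$; the paper invokes $\Sigma^0_1$ induction here, which is available in $\RCA_0$, so nothing is lost.
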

 

It should be noted for completeness, that in many specific versions of Sorites the main culprit of the ensuing contradiction is the very assumption that there is a function mapping a specific set $X$, such as the set of heights, into a set of attributes $Y$, such as ``tall, not tall." However, this assumption (called Supervenience) often can be saved or at least made plausible by redefining the set $X$ (e.g., replacing its elements by sequences in which they are listed) or the set of the attributes $Y$ (e.g., replacing such attributes as ``tall, not tall" with probability distributions thereof). With this in mind, we can say that the two theorems above tell us that Supervenience can only be achieved by dispensing with either Tolerance or Connectedness. Which of them it is in specific cases can be revealed by adopting the ``behavioral" approach to Sorites \cite{DD-2010a, DD-TA}. We do not need to discuss this approach here: our focus in the present paper is the proof-theoretic strength of Theorems \ref{thm:nontol} and \ref{thm:nonconn}.



\section{Reverse mathematical analysis}

Reverse mathematics is an area of mathematical logic devoted to classifying mathematical theorems according to their proof theoretic strength.  The goal is to calibrate this strength according to how much comprehension is needed to establish the existence of the sets needed to prove the theorem (i.e., according to how complex the formulas specifying such sets must be).  This is a two-step process. The first involves searching for a comprehension scheme sufficient to prove the theorem, while the second gives sharpness by showing that the theorem is in fact equivalent to this comprehension scheme over some base (minimal) theory. In practice, we use for these comprehension schemes certain subsystems of second-order arithmetic.  As our base theory, we use a weak subsystem called $\RCA_0$, which suffices to prove the existence of the computable sets, but not of any non-computable ones. As such, $\RCA_0$ corresponds to computable or constructive mathematics.  A strictly stronger system is $\ACA_0$, which adds to $\RCA_0$ comprehension for sets described by arithmetical formulas. $\ACA_0$ is considerably stronger than $\RCA_0$, sufficing to prove the existence of, e.g., the halting set, and many other non-computable sets. There is, more generally, a rich and fruitful relationship between reverse mathematics on the one hand, and computability theory on the other (see, e.g., \cite{Shore-2010} for a discussion). We refer the reader to Simpson \cite{Simpson-2009} or Hirschfeldt \cite{Hirschfeldt-2014} for background on reverse mathematics, and to Soare \cite{Soare-2016} for background on computability.

In this section, we provide the computability-theoretic and reverse mathematical analysis of Theorems \ref{thm:nontol} and \ref{thm:nonconn}. We begin by formalizing the concepts from Definition \ref{defn:Vspace} in a countable setting.

\begin{definition}\label{defn:vicinities}
	Let $X$ be a non-empty subset of $\omega$.
	\begin{enumerate}
		\item A \emph{weak system of vicinities for $x$ in $X$} is a sequence $\mathcal{W} = \seq{W_n : n \in \omega}$ such that $x \in W_n \subseteq X$ for all $n$.
		\item A \emph{weak V-space} is a pair $(X,\{\mathcal{W}_x : x \in X\})$, where for each $x \in X$, $\mathcal{W}_x$ is a weak system of vicinities for $x$ in $X$.
		\item A \emph{strong system of vicinities for $x$ in $X$} is a sequence $\mathcal{S} = \seq{S_n : n \in I}$, where $I$ is a non-empty (possibly finite) initial segment of $\omega$, $x \in S_n \subseteq X$ for all $n \in I$, and $S_n \neq S_m$ for all $n,m \in I$ with $n \neq m$.
		\item A \emph{strong V-space} is a pair $(X,\{\mathcal{S}_x : x \in X\})$, where for each $x \in X$, $\mathcal{S}_x$ is a strong system of vicinities for $x$ in $X$.
	\end{enumerate}
\end{definition}

Note that every strong system of vicinities for $x$ in $X$ computes a weak such system. Namely, if $\seq{S_n : n \in I}$ is a strong system of vicinities for $x$, define a weak system of vicinities $\seq{W_n : n \in \omega}$ for $x$ by setting $W_n = S_n$ for all $n \in I$, and $W_n = S_0$ for all $n \notin I$. The converse is false, because in a weak system of vicinities $\seq{W_n : n \in \omega}$ it could in principle be that $W_n = W_m$ for some $n \neq m$, and there is no computable way to tell when this is the case.

\begin{proposition}\label{prop:0primeupper}
	Let $(X,\{\mathcal{W}_x : x \in X\})$ be a computable weak V-space, $Y$ a computable set, and $\pi : X \to Y$ a computable function. If there exist $a,b \in X$ with $\pi(a) \neq \pi(b)$, but every $x \in X$ has a vicinity on which $\pi$ is constant, then there is a $\emptyset'$-computable cover witnessing that $a$ and $b$ are not connected.
\end{proposition}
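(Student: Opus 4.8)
The plan is to construct the desired cover directly: for each $x \in X$ we pick a vicinity in $\mathcal{W}_x$ on which $\pi$ is constant --- one exists by the Tolerance hypothesis --- and we check that such a choice can be made uniformly computably in $\emptyset'$, and that the resulting cover automatically witnesses non-connectedness. Write $\mathcal{W}_x = \seq{W^x_n : n \in \omega}$ for each $x \in X$. Since the weak V-space, the set $Y$, and the function $\pi$ are all computable, the relation ``$y \in W^x_n$'' is computable uniformly in $x$, $n$, and $y$, and hence --- because $x \in W^x_n$ always holds --- the statement ``$\pi$ is constant on $W^x_n$'', equivalently $(\forall y)(y \in W^x_n \rightarrow \pi(y) = \pi(x))$, is a $\Pi^0_1$ predicate of $(x,n)$, uniformly. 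Tolerance says exactly that for each $x$ there is at least one $n$ making this true, so the function $f$ that sends $x$ to the least such $n$ is total; and since $\emptyset'$ uniformly decides $\Pi^0_1$ predicates, $f$ is $\emptyset'$-computable. Put $V_x = W^x_{f(x)}$ for each $x \in X$. Then $\set{V_x : x \in X}$ is a cover of the V-space, and it is $\emptyset'$-computable.

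It remains to see that this cover witnesses that $a$ and $b$ are not connected. Suppose not: then there is a finite sequence $x_0, \ldots, x_k$ with $a = x_0$, $b = x_k$, and $V_{x_j} \cap V_{x_{j+1}} \neq \emptyset$ for all $j < k$. For each $j < k$, fix $z_j \in V_{x_j} \cap V_{x_{j+1}}$. Since $\pi$ is constant on $V_{x_j}$ and both $x_j$ and $z_j$ lie in $V_{x_j}$, we have $\pi(x_j) = \pi(z_j)$; similarly $\pi(z_j) = \pi(x_{j+1})$, so $\pi(x_j) = \pi(x_{j+1})$. Running this along the sequence yields $\pi(a) = \pi(x_0) = \pi(x_k) = \pi(b)$, contradicting $\pi(a) \neq \pi(b)$. (This is the same computation underlying Theorem \ref{thm:nonconn}, here applied to an arbitrary weak V-space once the vicinities have been chosen so that $\pi$ is locally constant.)

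No step here is difficult; the only point needing care is why $\emptyset'$ is the correct bound. A vicinity $W^x_n$ may be infinite, so constancy of $\pi$ on it is genuinely a $\Pi^0_1$ condition rather than a decidable one, which is why a non-computable oracle enters at all. But deciding this condition is a single $\Pi^0_1$ question for each candidate $n$, posed uniformly in $x$, and Tolerance guarantees that the search for a good $n$ terminates, so one application of $\emptyset'$ suffices and the complexity does not rise any higher.
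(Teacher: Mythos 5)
Your proof is correct and follows essentially the same route as the paper: use $\emptyset'$ to search, for each $x$, for the least $n$ such that $\pi$ is constant on $W^x_n$ (a uniformly $\Pi^0_1$ condition whose witness exists by Tolerance), take that as $V_x$, and observe that since $\pi$ is constant on every vicinity of the resulting cover, any chain of pairwise-intersecting vicinities from $a$ to $b$ would force $\pi(a) = \pi(b)$. Your write-up simply supplies more detail than the paper does on the two points it treats as immediate, namely the complexity bound and the final chain argument.
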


\begin{proof}
	For each $x \in X$, write $\mathcal{W}_x = \seq{W_{x,n} : n \in \omega}$. Now given $x$, search computably in $\emptyset'$ for the least $n$ such that $\pi(y) = \pi(x)$ for all $y \in W_{x,n}$, which exists by assumption, and define $V_x = W_{x,n}$. Then $\seq{V_x : x \in X}$ is a cover of $X$, and $\pi$ is constant on each $V_x$. Since $\pi(a) \neq \pi(b)$, this cover obviously witnesses that $a$ and $b$ are not connected.
\end{proof}

In the proof of the following result, we fix an computable enumeration $\emptyset'[s]$ of $\emptyset'$. So for all $x$, we have $x \in \emptyset'$ if and only if $x \in \emptyset'[s]$ for some $s$, in which case also $x \in \emptyset'[t]$ for all $t \geq s$. We write $x \searrow \emptyset'[s]$ if $x \in \emptyset'[s]$ and $x \notin \emptyset'[t]$ for any $t < s$.

\begin{proposition}\label{prop:0primelower}
	There exists a computable strong V-space $(X,\{\mathcal{S}_x : x \in X\})$, a computable function $\pi : X \to \{0,1\}$, and $a,b \in X$ with the following properties:
	\begin{enumerate}
		\item $\pi(a) \neq \pi(b)$;
		\item every $x \in X$ has an vicinity on which $\pi$ is constant;
		\item every cover witnessing that $a$ and $b$ are not connected computes $\emptyset'$.
	\end{enumerate}
\end{proposition}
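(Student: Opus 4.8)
The plan is to build, for each $e \in \omega$, a small gadget centered on a point $z_e$ whose vicinity in any witnessing cover is forced, and encodes whether $e \in \emptyset'$. Let $X$ consist of $a$, $b$, the points $z_e$ ($e \in \omega$), and two families of auxiliary ``connector'' points $d_{e,n}$ and $g_{e,s}$ ($e,n,s \in \omega$), all distinct via a fixed computable coding into $\omega$; set $\pi(a) = 0$, $\pi(b) = 1$, $\pi(z_e) = 0$, $\pi(d_{e,n}) = 1$. The connectors are wired toward $b$: give $d_{e,n}$ the single vicinity $\set{d_{e,n},b}$; give $g_{e,s}$ the single vicinity $\set{g_{e,s},b}$ and value $\pi(g_{e,s}) = 1$ if $e \in \emptyset'[s]$, and otherwise the single vicinity $\set{g_{e,s}}$ and value $\pi(g_{e,s}) = 0$. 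Give $a$ and $b$ the single vicinities $\set a$ and $\set b$. Finally give $z_e$ the vicinity system $\seq{S^\infty_e, S_{e,0}, S_{e,1}, \dots}$, where $S^\infty_e = \set{z_e,a} \cup \set{g_{e,s} : s \in \omega}$, and where $S_{e,n} = \set{z_e,a}$ if $e \searrow \emptyset'[n]$ and $S_{e,n} = \set{z_e,a,d_{e,n}}$ otherwise. Since the predicates $e \in \emptyset'[s]$ and $e \searrow \emptyset'[s]$ are decidable uniformly in $e,s$ for the fixed enumeration, every set above has decidable membership and the result is a computable strong V-space (the listed vicinities of $z_e$ are pairwise distinct because $S^\infty_e$ is infinite and the $S_{e,n}$ differ in their $d$-components). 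Condition (1) is clear, and condition (2) needs checking only at $z_e$: if $e \notin \emptyset'$ then $\pi$ is constant on $S^\infty_e$, while if $e \searrow \emptyset'[s_e]$ then $\pi$ is constant on $S_{e,s_e} = \set{z_e,a}$.

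The heart of the argument is a dichotomy at each $z_e$. Call $S^\infty_e$ (if $e \notin \emptyset'$), respectively $S_{e,s_e}$ (if $e \searrow \emptyset'[s_e]$), the \emph{safe} vicinity of $z_e$. First I would show that if a cover assigns $z_e$ any other vicinity $V$, then $a$ and $b$ are connected under that cover: such a $V$ contains $a$ and contains some connector $c$ (a suitable $d_{e,n}$ when $V = S_{e,n}$, and $g_{e,s_e}$ when $V = S^\infty_e$ and $e \in \emptyset'$) whose unique vicinity is $\set{c,b}$, so $a, z_e, c, b$ is a connecting chain regardless of the cover's other assignments. Hence every witnessing cover assigns $z_e$ its safe vicinity, for each $e$. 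Conversely, the cover $\mathcal C^*$ that assigns each $z_e$ its safe vicinity (and every other point its unique vicinity) is a witnessing cover: tracing which assigned vicinities intersect, the component of $a$ in the graph on $X$ that joins two points when the vicinities $\mathcal C^*$ assigns them meet is exactly $\set a \cup \set{z_e : e \in \omega} \cup \set{g_{e,s} : e \notin \emptyset',\ s \in \omega}$, which omits $b$. In particular $a$ and $b$ are not connected, so witnessing covers exist (in fact $\mathcal C^*$ is the only one).

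Given all this, condition (3) follows: from any witnessing cover $\mathcal C$ we have $e \in \emptyset'$ if and only if the vicinity $\mathcal C$ assigns to $z_e$ is not $S^\infty_e$ --- equivalently, if and only if $g_{e,0}$ does not belong to that vicinity --- which is decidable from $\mathcal C$ uniformly in $e$, so $\mathcal C$ computes $\emptyset'$. I expect the main obstacle to be making conditions (2) and (3) coexist: if any point kept a vicinity that were simultaneously $\pi$-constant and safe independently of $\emptyset'$, a cover could select it computably and witnessing would not encode $\emptyset'$. The construction avoids this by arranging that the unique safe vicinity of $z_e$ is also its unique $\pi$-constant vicinity and that its identity tracks $\emptyset'$: $S^\infty_e$ is routed to $b$ through the $g_{e,s}$ exactly when $e \in \emptyset'$, and the ladder of vicinities $S_{e,n}$ is padded with the connectors $d_{e,n}$ so that the bare vicinity $\set{z_e,a}$ appears among them only at the index $s_e$ (and only when $e \in \emptyset'$). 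The remaining thing to be careful about is that every vicinity really is a uniformly computable set, which is precisely where one uses the decidability of the predicates $e \in \emptyset'[s]$ and $e \searrow \emptyset'[s]$ for the fixed enumeration.
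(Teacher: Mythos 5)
Your construction is correct, and it verifies all three conditions, but it realizes the common underlying strategy --- force every witnessing cover's choice of vicinity at each coding point by threatening an $a$--$b$ chain whenever a ``wrong'' vicinity is chosen --- with a genuinely different gadget than the paper. The paper works directly on $X=\omega$: the points $a$ and $b$ each get a single \emph{infinite} vicinity that absorbs timing markers $\langle x,s\rangle$ and $\langle x,s+1\rangle$ when $x\searrow\emptyset'[s]$, every other $x$ gets the nested family $V_{x,n}=\{x\}\cup\{\langle x,t\rangle : t\geq n\}$, and a witnessing cover is forced to choose $V_{x,n}$ with $n$ past the enumeration stage of $x$; decoding is then ``$x\in\emptyset'[t]$ for the least $t$ with $\langle x,t\rangle$ in the chosen vicinity.'' You instead keep $a$ and $b$ inert (singleton vicinities) and route the threat through dedicated connector points $d_{e,n}$ and $g_{e,s}$ with forced two-element vicinities, distinguishing an infinite ``$e\notin\emptyset'$'' vicinity $S^\infty_e$ from a ladder of finite ones. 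What your version buys: the witnessing cover is unique, its existence is proved explicitly (the component-of-$a$ computation), the forced chains pass only through points with unique vicinities so no case analysis on the rest of the cover is needed, and decoding is a single oracle query ($g_{e,0}\in V_{z_e}$?) rather than an unbounded search. What the paper's version buys: economy --- no auxiliary indexing $e\mapsto z_e$ and no connector points, since the coding point for $x$ is $x$ itself --- at the cost of a slightly more delicate inductive definition of $\pi$ and of leaving the existence of a witnessing cover unproved (which is harmless for the proposition as stated, since for the reversal that existence is supplied by the statement being reversed, but your explicit check that a witnessing cover exists is good hygiene, as otherwise condition (3) could in principle hold vacuously).
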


\begin{proof}
	We work with $X = \omega$. Let $a < b$ be any two numbers not in $\emptyset'$. For every $x \in X$, we uniformly construct a computable strong system of vicinities. Each of $a$ and $b$ will have a single vicinity, $V_a$ and $V_b$, respectively, while every other $x$ will have infinitely many vicinities, $V_{x,0}, V_{x,1}, \ldots$. Specifically, we let
	\[
		V_a = \{a\} \cup \{ \seq{x,s} \in \omega^2 : x \searrow \emptyset'[s] \},
	\]
	and
	\[
		V_b = \{b\} \cup \{ \seq{x,s} \in \omega^2 : s \geq 1 \wedge x \searrow \emptyset'[s-1] \},
	\]
	and for every $x \notin \{a,b\}$ and for every $n$, we let
	\[
		V_{x,n} = \{x\} \cup \{ \seq{x,t} \in \omega^2 : t \geq n\}.
	\]
	Obviously, each of these vicinities is such that the resulting V-space is computable. Note that if $x$ is different from $a$ and $b$ then $V_{x,n} \neq V_{x,m}$ for all $n \neq m$. Also, if $x$ is enumerated into $\emptyset'$ at some stage $s$, then $\seq{x,s}$ belongs to $V_a$ and $\seq{x,s+1}$ to $V_b$, so for any $n \leq s$, $V_a \cap V_{x,n} \neq \emptyset$ and $V_b \cap V_{x,n} \neq \emptyset$. It follows that, for any such $n$, no cover containing $V_a$, $V_b$, and $V_{x,n}$ can witness that $a$ and $b$ are not connected. We shall make use of this fact below.
	
	We next define the function $\pi : X \to \{0,1\}$. To begin, set $\pi(a) = 0$ and $\pi(b) = 1$. For all other numbers, we proceed by induction. Unless already defined, set $\pi(0) = 0$ and $\pi(1) = 0$. Now fix $y > 1$ such that $\pi(y)$ is undefined, and assume we have defined $\pi$ on all smaller numbers. Say $y = \seq{x,s}$, so that in particular $x < y$. If $x \searrow \emptyset'[s]$, set $\pi(y) = 0$; if $s \geq 1$ and $x \searrow \emptyset'[s-1]$, set $\pi(y) = 1$; and otherwise, set $\pi(y) = \pi(x)$. This completes the definition. It is immediate that $\pi$ is constant on each of $V_a$ and $V_b$. For every $x$ different from $a$ and $b$, if $x \notin \emptyset'$ then $\pi$ is also constant on each $V_{x,n}$. And if $x$ is enumerated into $\emptyset'$, say at stage $s$, then $x$ is constant on every $V_{x,n}$ for $n \geq s+2$.
	
	It follows by construction that our V-space and function $\pi$ satisfy conditions (1) and (2) in the statement of the proposition. We conclude by verifying property (3). Fix any cover witnessing that $a$ and $b$ are not connected. Since $a$ and $b$ each have just one vicinity, the cover must contain $V_a$ and $V_b$. As noted above, if $x \in \emptyset'$, then this cover cannot contain $V_{x,n}$ for any $n \leq s$, where $s$ is the stage at which $x$ is enumerated into $\emptyset'$. It follows that $x \in \emptyset'$ if and only if $x$ is different from $a$ and $b$, and $x \in \emptyset'[t]$ for the least $t$ such that $\seq{x,t}$ belongs to the vicinity of $x$ in the cover. Thus, $\emptyset'$ is computable from the cover, as desired.
\end{proof}

Combining the above results allows us to characterize the proof-theoretic strength of Theorem \ref{thm:nontol}.

\begin{theorem}
	The following are equivalent over $\RCA_0$.
	\begin{enumerate}
		\item $\ACA_0$.
		\item Let $(X,\{\mathcal{W}_x : x \in X\})$ be a weak V-space, $Y$ a set, and $\pi : X \to Y$ a function. Suppose $a,b \in X$ are connected in this V-space, and $\pi(a) \neq \pi(b)$. Then there exists an $x \in X$ such that $\pi$ is not constant on any $W \in \mathcal{W}_x$.
		\item Let $(X,\{\mathcal{S}_x : x \in X\})$ be a strong V-space, $Y$ a set, and $\pi : X \to Y$ a function. Suppose $a,b \in X$ are connected in this V-space, and $\pi(a) \neq \pi(b)$. Then there exists an $x \in X$ such that $\pi$ is not constant on any $S \in \mathcal{S}_x$.
	\end{enumerate}
\end{theorem}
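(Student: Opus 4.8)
The plan is to establish the cycle of implications $(1) \Rightarrow (2) \Rightarrow (3) \Rightarrow (1)$. For $(1) \Rightarrow (2)$ I would argue in $\ACA_0$, essentially formalizing the proof of Proposition~\ref{prop:0primeupper}. Given the weak V-space, $Y$, $\pi$, and connected $a,b$ with $\pi(a)\neq\pi(b)$, suppose toward a contradiction that every $x \in X$ has a vicinity on which $\pi$ is constant. Writing $\mathcal{W}_x = \seq{W_{x,n} : n \in \omega}$, the function sending $x$ to the least $n$ with $\pi$ constant on $W_{x,n}$ is total (the relevant set is nonempty by hypothesis, hence has a least element) and has an arithmetical graph, so $\ACA_0$ produces the cover $\seq{V_x : x \in X}$ with each $V_x = W_{x,n}$ for this least $n$. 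By connectedness there is a finite chain $a = x_0,\ldots,x_k = b$ with $V_{x_j}\cap V_{x_{j+1}}\neq\emptyset$ for all $j<k$; since $\pi$ is constant on each $V_{x_j}$ and consecutive vicinities meet, $\pi(x_j)=\pi(x_{j+1})$ for all $j<k$, whence $\pi(a)=\pi(b)$, a contradiction.

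For $(2)\Rightarrow(3)$ I would use the observation following Definition~\ref{defn:vicinities}: a strong V-space $(X,\{\mathcal{S}_x\})$ effectively induces a weak V-space $(X,\{\mathcal{W}_x\})$ in which, for each $x$, the vicinities listed by $\mathcal{W}_x$ are exactly those listed by $\mathcal{S}_x$ (possibly with repetition), and this induced object exists in $\RCA_0$. Since the two V-spaces then have precisely the same covers, $a$ and $b$ are connected in one iff in the other, and ``$\pi$ is not constant on any $W\in\mathcal{W}_x$'' is literally the same statement as ``$\pi$ is not constant on any $S\in\mathcal{S}_x$''. So, given a strong V-space with the hypotheses of (3), applying (2) to the induced weak V-space immediately yields the conclusion of (3).

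The substantive direction is $(3)\Rightarrow(1)$, the formalization of Proposition~\ref{prop:0primelower}. Working in $\RCA_0$ and invoking the standard characterization of $\ACA_0$ over $\RCA_0$ in terms of ranges of injections (see~\cite{Simpson-2009}), it suffices to show that $\ran f$ exists for an arbitrary injection $f:\omega\to\omega$. Replacing $f$ by $n\mapsto f(n)+2$, I may assume $0,1\notin\ran f$, and I take $a=0$, $b=1$. I would then run the construction of Proposition~\ref{prop:0primelower} verbatim, with $\emptyset'$ replaced by $\ran f$ and ``$x\searrow\emptyset'[s]$'' replaced by ``$f(s)=x$''; the resulting strong V-space and $\pi:X\to\{0,1\}$ are computable from $f$, hence exist in $\RCA_0$, and — as in that proof, now carried out in $\RCA_0$ — satisfy $\pi(a)\neq\pi(b)$ and ``every $x\in X$ has a vicinity on which $\pi$ is constant''. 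Applying (3): its conclusion fails, so its hypothesis must fail, i.e.\ $a$ and $b$ are not connected, and $\RCA_0$ therefore supplies a cover $V=\seq{V_x : x\in X}$ witnessing this. Exactly as in Proposition~\ref{prop:0primelower}, for $x\notin\{a,b\}$ one has $x\in\ran f$ if and only if $x\in\ran f[n_x]$, where $n_x$ is the least $t$ with $\seq{x,t}\in V_x$ (which exists because $V_x$ equals some $V_{x,m}$); this condition on $x$ is both $\Sigma^0_1$ and $\Pi^0_1$ in $V\oplus f$, so $\Delta^0_1$ comprehension yields $\ran f$ as a set, and $\ACA_0$ follows.

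I expect the only real obstacle to be the bookkeeping in $(3)\Rightarrow(1)$: one must check that the computability-theoretic argument of Proposition~\ref{prop:0primelower} survives the passage to $\RCA_0$. Concretely, one should verify that ``every $x$ has a vicinity on which $\pi$ is constant'' is provable and not merely true — the witnessing vicinity being found non-uniformly via a case split on whether $x\in\ran f$ — that $n_x$ is provably total, and that the identity relating $\{x : x\notin\{a,b\}\ \wedge\ x\in\ran f[n_x]\}$ to $\ran f$ requires only $\Sigma^0_1$ induction. None of this should be difficult, but it is where the content of the reversal lies.
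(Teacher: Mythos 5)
Your proposal is correct and follows essentially the same route as the paper: $(1)\Rightarrow(2)$ by formalizing Proposition \ref{prop:0primeupper} in $\ACA_0$, $(2)\Rightarrow(3)$ via the remark after Definition \ref{defn:vicinities} that a strong system of vicinities yields a weak one with the same covers, and $(3)\Rightarrow(1)$ by formalizing Proposition \ref{prop:0primelower} over $\RCA_0$ using the ranges-of-injections characterization of $\ACA_0$. The paper's proof is only a three-line sketch citing those propositions, and your write-up supplies exactly the details (the arithmetical definition of the least-index cover, the shift of $f$ to keep $a,b$ out of its range, and the $\Delta^0_1$ recovery of $\ran f$ from the witnessing cover) that the intended formalization requires.
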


\begin{proof}
	The implication from part 1 to part 2 follows by formalizing the proof of Proposition \ref{prop:0primeupper} in $\ACA_0$. The implication from 2 to 3 is immediate by the remark following Definition \ref{defn:vicinities}. The implication 3 to 1 follows by formalizing the proof of Proposition \ref{prop:0primelower} in $\RCA_0$.
\end{proof}

We obtain a very contrasting result concerning the strength of Theorem \ref{thm:nonconn}. It is an easy observation that if $X$, $Y$, and $\pi : X \to Y$ are all computable, then the V-space induced by $\pi$ is a computable strong V-space. Formalizing this, we have that, given sets $X$ and $Y$ and a function $\pi : X \to Y$, $\RCA_0$ can prove the existence of the V-space (as a storng V-space) induced by $\pi$.

\begin{theorem}
	$\RCA_0$ proves the following statement. Let $X$ and $Y$ be sets, and $\pi : X \to Y$ a function with $\pi(a) \neq \pi(b)$ for some $a,b \in X$. Then $a$ and $b$ are not connected in the V-space induced by $\pi$.
\end{theorem}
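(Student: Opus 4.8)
The plan is to work throughout in $\RCA_0$ and to prove non-connectedness by exhibiting explicitly a cover of the induced V-space that admits no connecting chain. By the observation preceding the statement, $\RCA_0$ proves that the induced V-space exists (as a strong V-space): for each $x \in X$ the single vicinity of $x$ is $V_x = \set{y \in X : \pi(x) = \pi(y)}$, and the sequence $\seq{V_x : x \in X}$ — which is moreover the \emph{unique} cover of this V-space — is definable from $\pi$ by a bounded formula and hence exists by $\Delta^0_1$-comprehension. I would show that this cover witnesses that $a$ and $b$ are not connected.

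The only substantive point is the following elementary equivalence, which $\RCA_0$ verifies directly: for all $x,x' \in X$ we have $V_x \cap V_{x'} \neq \emptyset$ if and only if $\pi(x) = \pi(x')$. Indeed, if some $z$ lies in both $V_x$ and $V_{x'}$ then $\pi(x) = \pi(z) = \pi(x')$; conversely, if $\pi(x) = \pi(x')$ then $x$ itself belongs to $V_x \cap V_{x'}$, since $\pi(x)=\pi(x)$ and $\pi(x)=\pi(x')$.

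To conclude, I would argue by contradiction. Suppose $\seq{V_x : x \in X}$ does not witness non-connectedness; then there is a finite sequence $x_0,\ldots,x_k$ with $a = x_0$, $b = x_k$, and $V_{x_j} \cap V_{x_{j+1}} \neq \emptyset$ for every $j < k$. By the equivalence just noted, $\pi(x_j) = \pi(x_{j+1})$ for each $j < k$, and an induction on $i \leq k$ — using only $\Sigma^0_0$-induction, with $\pi$, the sequence, and $k$ as parameters, which is available in $\RCA_0$ — yields $\pi(x_0) = \pi(x_i)$ for all $i \leq k$. Taking $i = k$ gives $\pi(a) = \pi(b)$, contradicting the hypothesis. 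Hence no such sequence exists, so the cover $\seq{V_x : x \in X}$ witnesses that $a$ and $b$ are not connected.

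I do not expect a genuine obstacle here. In contrast with the situation for Theorem~\ref{thm:nontol}, this direction requires no search for a least vicinity on which $\pi$ is constant and no appeal to any non-computable data, so the whole argument remains inside $\RCA_0$. The only matters requiring a little care are the bookkeeping involved in coding the induced V-space and its cover as second-order objects, and checking that the chain induction above is indeed $\Sigma^0_0$ and therefore legitimate in the base theory.
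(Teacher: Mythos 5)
Your proof is correct and follows essentially the same route as the paper's: exhibit the unique cover of the induced V-space, observe that consecutive links of any connecting chain force equal $\pi$-values, and conclude $\pi(a)=\pi(b)$ by a bounded induction along the chain. The only (harmless) difference is that you work directly with the equivalence $V_x \cap V_{x'} \neq \emptyset \iff \pi(x)=\pi(x')$, whereas the paper extracts a sequence of least witnesses $y_j \in S_{x_j} \cap S_{x_{j+1}}$ by $\Delta^0_1$ comprehension before running the same induction.
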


\begin{proof}
	We argue in $\RCA_0$. The only cover in the V-space induced by $\pi$ is $\seq{S_x : x \in X}$, where $S_x$ is the (unique) vicinity of $x$, $\{y \in X : \pi(x) = \pi(y)\}$. Suppose $\seq{x_0,\ldots,x_k}$ is a finite sequence of elements of $X$ such that $a = x_0$, $b = x_k$, and $S_{x_j} \cap S_{x_{j+1}} \neq \emptyset$ for each $j < k$. Define $\seq{y_j : j < k}$ such that $y_j$ is the least element of $S_{x_j} \cap S_{x_{j+1}}$ for all $j < k$, which exists by $\Delta^0_1$ comprehension, and is well-defined by assumption. Thus, $\pi(y_j) = \pi(y_{j+1})$ by assumption, so $\pi(y_j) = \pi(a)$ for all $j \leq k$ by $\Sigma^0_1$ induction. But then $\pi(a) = \pi(y_{k-1}) = \pi(b)$, a contradiction.
\end{proof}

\section{Conclusion}

In the analysis of Sorites, Supervenience implies incompatibility of Connectedness and Tolerance. We have shown that the two implications forming this incompatibility, Connectedness $\rightarrow$ $\neg$Tolerance and Tolerence $\rightarrow$ $\neg$Connectedness, when formalized using the general framework of Fr\'{e}chet spaces as in Dzhafarov and Dzhafarov \cite{DD-2010a}, have different proof-theoretic strength: the formalization of the former implication has the strength of $\RCA_0$, while the formalization of the latter has the strength of $\ACA_0$. In this sense, the implication Connectedness $\rightarrow$ $\neg$Tolerance can be formalized by more constructive methods than its contrapositive.


\begin{thebibliography}{10}

\bibitem{Dzhafarov-2011a}
Damir~D. Dzhafarov, Infinite saturated orders, Order, 28, 163--172, 2011.

\bibitem{DD-TA}
Damir~D. Dzhafarov and Ehtibar~N. Dzhafarov, Classificatory sorites,
  probabilistic supervenience, and rule-making, in A.~Abasnezhad and O.~Bueno
  (editors), On the {S}orites {P}aradox, Springer, to appear.

\bibitem{DD-2010a}
Ehtibar~N. Dzhafarov and Damir~D. Dzhafarov, Sorites without vagueness {I}:
  {C}lassificatory sorites, Theoria, 76, 4--24, 2010.

\bibitem{Hirschfeldt-2014}
Denis~R. Hirschfeldt, Slicing the Truth: On the Computable and Reverse
  Mathematics of Combinatorial Principles, Lecture notes series / Institute for
  Mathematical Sciences, National University of Singapore, World Scientific
  Publishing Company Incorporated, 2014.

\bibitem{Marcone-2007}
Alberto Marcone, Interval orders and reverse mathematics, Notre Dame J. Formal
  Logic, 48, 425--448 (electronic), 2007.

\bibitem{Shore-2010}
Richard~A. Shore, Reverse mathematics: the playground of logic, Bull. Symbolic
  Logic, 16, 378--402, 2010.

\bibitem{Sierpinski-1952}
Waclaw Sierpinski, General topology, Mathematical Expositions, No. 7,
  University of Toronto Press, Toronto, translated by C. Cecilia Krieger, 1952.

\bibitem{Simpson-2009}
Stephen~G. Simpson, Subsystems of second order arithmetic, Perspectives in
  Logic, second edition, Cambridge University Press, Cambridge, 2009.

\bibitem{Soare-2016}
Robert~I. Soare, Turing Computability: Theory and Applications, first edition,
  Springer Publishing Company, Incorporated, 2016.

\bibitem{WC-2010}
Zach Weber and Mark Colyvan, A topological sorites, Journal of Philosophy, 107,
  311--325, 2010.

\bibitem{Williamson-1994}
Timothy Williamson, Vagueness, Routledge, 1994.

\end{thebibliography}
\end{document}